\journal{Journal of Differential Equations}
\def\BM{{\rm BM}}
\def\C{{\mathbb C}}
\def\ri{{\rm i}}
\newtheorem{theorem}{Theorem}[section]
\newtheorem{lemma}[theorem]{Lemma}
\newtheorem{proposition}[theorem]{Proposition}
\newtheorem{corollary}[theorem]{Corollary}
\def\re{{\rm e}}
\def\rd{{\rm d}}
\def\e{{\rm e}}
\def\Re{{\mathbb R}}
\def\R{{\mathbb R}}
\def\A{{\cal A}}
\def\be#1{\begin{equation}\label{#1}}
\def\ee{\end{equation}}
\def\bea{\begin{eqnarray*}}
\def\eea{\end{eqnarray*}}
\def\dist{{\rm dist}}
\def\<{{\langle}}
\def\>{{\rangle}}
\def\N{\mathbb N}
\def\dist{{\rm dist}}
\def\({\left(}
\def\){\right)}
\def\cal{\mathcal}
\def\{{\left\lbrace}
\def\}{\right\rbrace}
\def\K{{\mathbb K}}
\begin{document}

\begin{frontmatter}

%% Title, authors and addresses

%% use the tnoteref command within \title for footnotes;
%% use the tnotetext command for theassociated footnote;
%% use the fnref command within \author or \address for footnotes;
%% use the fntext command for theassociated footnote;
%% use the corref command within \author for corresponding author footnotes;
%% use the cortext command for theassociated footnote;
%% use the ead command for the email address,
%% and the form \ead[url] for the home page:
%% \title{Title\tnoteref{label1}}
%% \tnotetext[label1]{}
%% \author{Name\corref{cor1}\fnref{label2}}
%% \ead{email address}
%% \ead[url]{home page}
%% \fntext[label2]{}
%% \cortext[cor1]{}
%% \address{Address\fnref{label3}}
%% \fntext[label3]{}

\title{Finite-dimensional global attractors
in Banach spaces}

%% use optional labels to link authors explicitly to addresses:
%% \author[label1,label2]{}
%% \address[label1]{}
%% \address[label2]{}

\author[1]{Alexandre N. Carvalho}
\author[2]{Jos\'e A. Langa}
\author[3]{James C. Robinson}

\address[1]{Instituto de Ci\^{e}ncias Matem\'{a}ticas e de
Computa\c{c}ao, Universidade de S\~{a}o Paulo-Campus de S\~{a}o
Carlos, Caixa Postal 668, 13560-970 S\~{a}o Carlos SP, Brazil}
\address[2]{Departamento de Ecuaciones Diferenciales y An\'alisis
Num\'erico, Universidad de Sevilla, 41080-Sevilla
Spain}
\address[3]{Mathematics Institute, University of Warwick, Coventry,
CV4 7AL. U.K.}

\begin{abstract}
We provide bounds on the upper box-counting dimension of negatively
invariant subsets of Banach spaces, a problem that is easily reduced
to covering the image of the unit ball under a linear map by a
collection of balls of smaller radius. As an application of the
abstract theory we show that the global attractors of a very broad
class of parabolic partial differential equations (semilinear
equations in Banach spaces) are finite-dimensional.\end{abstract}

\begin{keyword}
Global attractors \sep Negatively invariant sets \sep Box-counting
dimension \sep Banach--Mazur distance \sep Auerbach basis

%% PACS codes here, in the form: \PACS code \sep code

%% MSC codes here, in the form: \MSC code \sep code
%% or \MSC[2008] code \sep code (2000 is the default)

\end{keyword}

\end{frontmatter}

\section{Introduction}

There are now many techniques available for bounding the
(box-counting) dimension of the global attractors that have been
shown to exist for a variety of interesting models of mathematical
physics (for many examples see Temam, 1988).

To our knowledge, the earliest such result is due to Mallet-Paret
(1976), who showed that if $K$ is a compact subset of a Hilbert
space $H$, $f:H\rightarrow H$ is a continuously differentiable map,
$f(K)\supset K$ (`$K$ is negatively invariant'), and the derivative
of $f$ is everywhere equal to the sum of a compact map and a
contraction, then the upper box-counting dimension of $K$ is finite
(${\rm dim}_B(K)<\infty$). This method was subsequently generalised
by Ma\~n\'e (1981) to treat subsets of Banach spaces.

Both these methods rely on approximating the image of any ball $B$
under $f$ as the sum of a ball in a finite-dimensional subspace of
$X$ plus a `small' error. Douady \& Oesterl\'e (1980) developed a
method for subsets of $\Re^n$ that instead approximates $f(B)$ by an
ellipse, plus an error. This approach, extended to subsets of
Hilbert spaces by Constantin \& Foias (1985), produces significantly
improved bounds on the dimension and remains the most powerful to
date.

We concentrate here on the case of subsets of Banach spaces, since
the Hilbert space theory has been well developed. The problem
reduces, as we shall see, to bounding the number of balls required
to cover the image of the unit ball under a certain family of linear
maps.

%However, the approach developed by Ma\~n\'e remains the only method valid for subsets of Banach spaces. In this paper we revisit his result. Inspired in part by the argument in Ladyzhenskaya (1991), we base our proof on a particular abstract assumption that is implicit in Ma\~n\'e's paper, but which significantly simplifies the argument. This also makes the method sufficiently flexible that we can improve Ma\~n\'e's estimate in the Banach space case, and obtain what is the best known bound in the Hilbert space case in a much cleaner way than is possible using the standard argument (e.g.\ as presented in Chepyzhov \& Vishik, 200*).

%%%The argument -- as with all of the arguments that lead dimension bounds for attractors -- has at its heart results about the covering of certain subsets of $X$ by collections of balls of a fixed radius. We show that the estimates required here are related to bounds on the Banach--Mazur distance between arbitrary $n$-dimensional Banach spaces, and thus in particular to John's Theorem (1948) concerning minimal bounding ellipses (see Bollob\'{a}s (1990) Chapter 4).

We choose to work with the upper box-counting dimension (${\rm
dim}_B(\cdot)$) for two main reasons. First, it provides an upper
bound on the topological (${\rm dim}_T(\cdot)$) and Hausdorff
dimension (${\rm dim}_H(\cdot)$) (see Hurewicz \& Wallman (1941) for
more on the topological dimension and a proof that ${\rm
dim}_T(\cdot)\le {\rm dim}_H(\cdot)$; and Falconer (2003) for more
on the Hausdorff dimension and a proof that ${\rm dim}_H(X)\le {\rm
dim}_B(X)$ in general). Secondly, a set with finite upper
box-counting dimension can be embedded into a finite-dimensional
Euclidean space using a linear map (a result which fails for sets
that only have finite Hausdorff dimension) -- this was originally
proved by Ma\~n\'e in the same paper as the result which forms the
main topic of our work here, but his result has been significantly
improved in the Hilbert space case by Foias \& Olson (1996) [the
inverse is H\"older continuous] and Hunt \& Kaloshin (1999)
[explicit bounds on the H\"older exponent], and in the Banach space
case by Robinson (2009) [a Banach space version of Hunt \&
Kaloshin's result].

We now give a formal definition of the box-counting dimension. Let
$X$ be a Banach space and $K$ a compact subset of $X$. Define
$N_X(K,\epsilon)$ as
the minimum number of balls in $X$ of radius $\epsilon$ %centered at some %point of
%$K$
needed to cover $K$. The (upper) box-counting dimension ${\rm
dim}_B(K)$ of $K$ is defined by:
\begin{equation}\label{D:fracdim}
{\rm dim}_B(K)=\limsup_{\epsilon\to 0}
\frac{\log{N_X(K,\epsilon)}}{-\log\epsilon}
\end{equation}
(the lim sup in the definition is necessary; there are sets for
which the lim sup is not equal to the lim inf, see for example
Mattila, 1995). Essentially this definition extracts the exponent
$d$ from the scaling law $N_X(K,\epsilon)\sim\epsilon^{-d}$. More
rigorously,  ${\rm dim}_B(K)$ is the smallest real number such that
for any $d>{\rm dim}_B(K)$ there exists an $\epsilon_0>0$ such that
$N_X(K,\epsilon) \leqslant \epsilon^{-d}$ for all
$0<\epsilon<\epsilon_0$.

%It is easy to see that
%\begin{equation}
%{\rm dim}_H(K) \leqslant {\rm dim}_B(K).
%\end{equation}
%Moreover, ${\rm dim}_B(K)$ and ${\rm dim}_H(K)$ can be different
%(see Ma\~{n}\'{e} (1981)).

%Estimates on the fractal dimension lead estimates on the dimension of finite dimensional subspaces where the invariant set $\Gamma$ can be projected in a one-to-one way. When $\Gamma$ is invariant, that gives meaning to the assertion \emph{the dynamics in $\Gamma$ is finite dimensional}.
%Recall that a subset of a metric space $W$ is said \emph{residual}
%in $W$ if it is countable intersection of open dense subsets $W$. It is proved in  Ma\~{n}\'{e} (1981), Lemma 1.1 that
%
%\begin{theorem}
%If ${\rm dim}_B(K)<\infty$ and $Y$ is a finite dimensional subspace
%of $X$ with $2{\rm dim}_B(K) + 1 <{\rm dim} \, Y<\infty$, then
%$\{P\in \mathcal{P}(X,Y): P{{|}_K} \hbox{ is injective}\}$ is
%residual in $\mathcal{P}(X,Y)$.
%\end{theorem}
%
%As a consequence of this and of the results in Section \ref{Sappl} we recover the results of Foias and Prodi (1967) and of Ladyzhenskaya (1973) for Navier-Stokes equations.

\medskip

The arguments that provide bounds on the dimension of attractors all
follow similar lines, which we can formalise in the following lemma.

\begin{lemma}\label{thebound}
Let $K$ be a compact subset of a Banach space $X$, let
$f:X\rightarrow X$ be continuously differentiable in a neigbourhood
of $K$, and let $K$ be negatively invariant for $f$, i.e.\
$f(K)\supseteq K$. Suppose that there exist $\alpha$, $0<\alpha<1$
and $M\ge 1$ such that for any $x\in K$,
\be{deriv}
N_X(Df(x)[B_X(0,1)],\alpha)\le M.
\ee
Then
\be{eq:thebound}
{\rm dim}_B(K)\le \frac{\log M}{-\log\alpha}.
\ee
\end{lemma}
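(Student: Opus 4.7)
My plan is to convert the pointwise covering hypothesis (\ref{deriv}) into a one-step self-similar covering estimate for $K$ itself and then iterate, using the negative invariance $K\subseteq f(K)$.

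First I would establish a uniform linearisation of $f$ on $K$: since $f\in C^1$ in a neighbourhood of the compact set $K$, the derivative $Df$ is uniformly continuous on $K$, and a standard mean value argument gives the following statement. For every $\eta>0$ there exists $\delta_0=\delta_0(\eta)>0$ such that for all $x\in K$ and all $h\in X$ with $\|h\|\le\delta_0$,
$$\|f(x+h)-f(x)-Df(x)h\|\le\eta\|h\|.$$
Equivalently, $f(B_X(x,\delta))\subseteq f(x)+Df(x)[B_X(0,\delta)]+B_X(0,\eta\delta)$ for every $0<\delta\le\delta_0$. The covering number scales linearly, so (\ref{deriv}) yields $N_X(Df(x)[B_X(0,\delta)],\alpha\delta)\le M$; enlarging each of these $M$ balls by the additive error shows that $f(B_X(x,\delta))$ can be covered by $M$ balls of radius $(\alpha+\eta)\delta$.

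Next I would combine this with $K\subseteq f(K)$. Take a minimal cover of $K$ by balls of radius $\delta\le\delta_0$; by replacing each ball that meets $K$ with a ball of twice the radius centred at a point of $K$ (a factor of $2$ that is absorbed in the definition of ${\rm dim}_B$), one may assume the centres lie in $K$. Applying $f$ ball-by-ball yields
$$N_X(K,(\alpha+\eta)\delta)\le M\cdot N_X(K,\delta)\qquad\text{for all }0<\delta\le\delta_0.$$
Iterating $k$ times starting from $\delta=\delta_0$ gives $N_X(K,(\alpha+\eta)^k\delta_0)\le M^k N_X(K,\delta_0)$. Substituting into (\ref{D:fracdim}) and letting $k\to\infty$ produces
$${\rm dim}_B(K)\le\frac{\log M}{-\log(\alpha+\eta)},$$
and since $\eta>0$ is arbitrary (subject to $\alpha+\eta<1$), letting $\eta\to 0$ yields (\ref{eq:thebound}).

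The main obstacle is the uniform linearisation step: the hypothesis only supplies pointwise differentiability, but the iteration requires a single $\delta_0$ that is valid simultaneously for every $x\in K$. Both the continuity of $Df$ and the compactness of $K$ are essential here. The subsequent self-similar covering argument is routine; the small perturbation $\eta$ in the error term cannot be eliminated at finite scales, which is why the conclusion emerges only after sending $\eta\to0$.
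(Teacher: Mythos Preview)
Your proof is correct and follows essentially the same route as the paper's: uniform linearisation via compactness of $K$ and continuity of $Df$, the one-step covering inequality $N_X(K,(\alpha+\eta)r)\le M\,N_X(K,r)$ obtained from negative invariance, iteration, and finally $\eta\to0$. You are slightly more explicit about recentring the covering balls at points of $K$, a detail the paper leaves implicit.
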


\begin{proof}
First, we ensure that (\ref{deriv}) is sufficient to provide bounds
on the number of balls required to cover $f(B_X(x,r))$ when $r$ is
small enough. Since $f$ is continuously differentiable and $K$ is
compact, for any $\eta>0$ there exists an $r_0=r_0(\eta)$ such that
for any $0<r<r_0$ and any $x\in K$,
$$
f(B_X(x,r))\subseteq f(x)+Df(x)[B_X(0,r)]+B_X(0,\eta r),
$$
where $A+B$ is used to denote the set $\{a+b:\ a\in A,\ b\in B\}$.
It follows that
\be{fnow}
N_X(f[B_X(x,r)],(\alpha+\eta)r)\le M
\ee
for all $r\le r_0(\eta)$.

Now fix $\eta$ with $0<\eta<1-\alpha$, and let $r_0=r_0(\eta)$.
Cover $K$ with $N_X(K,r_0)$ balls of radius $r_0$. Apply $f$ to
every element of this cover. Since $f(K)\supseteq K$, this provides
a new cover of $K$ formed by sets all of the form $f(B_X(x,r_0))$,
for some $x\in K$. It follows from (\ref{fnow}) that each of these
images can be covered by $M$ balls of radius $(\alpha+\eta)r_0$,
ensuring that $N_X(K,(\alpha+\eta)r_0)\le MN_X(K,r_0)$. Applying
this argument $k$ times implies that
  $$
  N_X(K,(\alpha+\eta)^k r_0)\le M^k(K,r_0).
  $$
  It follows from the definition of ${\rm dim}_B(K)$ that
  $$
  {\rm dim}_B(K)\le\frac{\log M}{-\log(\alpha+\eta)},
  $$
  and since $\eta>0$ we arbitrary we obtain (\ref{eq:thebound}).
\end{proof}

The key to applying this approach is to be able to prove
(\ref{deriv}), i.e.\ to find a way of estimating the number of balls
of radius $\alpha$ required to cover $Df(x)B(0,1)$. Our argument is
based (as was that of Mallet-Paret and Ma\~n\'e) on the fact that
one can find a sharp estimate for coverings by balls in
$(\K^n,\ell^\infty)$, and then use an isomophism $T$ between
$(\K^n,\ell^\infty)$ and an $n$-dimensional linear subspace $U$ of
$X$ to relate coverings with respect to these two different norms.
We observe here that only the product $\|T\|\|T^{-1}\|$ occurs in
these estimates; this shows that covering results are related to the
Banach--Mazur distance between $(U,\|\cdot\|)$ and
$(\K^n,\ell^\infty)$, which is bounded by $\log n$ independently of
$U$. Although this follows from the powerful general result that
$d_\BM(X,Y)\le\log n$ for any two $n$-dimensional normed spaces, we
give a simple proof of the particular result that we require here.

We use our covering results to prove that a negatively invariant
compact set for a nonlinear map with a derivative that is a sum of a
strong contraction with a compact map has finite box-counting
dimension. Our proof is much simpler than that of Ma\~n\'e and our
bound on the dimension in a Banach space improves on his.

In Section \ref{Scoro} we derive interesting corollaries of the main
theorem, and consider some applications. In particular, we show that
the global attractors of a wide class of evolution equations are
finite dimensional.

\section{Coverings in Banach spaces}

In this section we provide a bound of the form
$$
N_X(T[B_X(0,1)],\alpha)\le M
$$
for linear maps $T$ that are the sum of a compact map and a
contraction. When $f:X\rightarrow X$ is such that $Df(x)$ is of this
form for all $x\in K$, and the contraction constant is uniformly
bounded over $K$, we show that (\ref{deriv}) holds uniformly over
$K$.

\subsection{The Banach--Mazur distance and coverings of balls in finite-dimensional subspaces}

The key result concerns coverings of a ball in a finite-dimensional
subspace of $X$ by balls of smaller radius. We use ideas related to
the Banach--Mazur distance to provide the estimate we need.

\subsubsection{The Banach--Mazur distance}

Let $X$ and $Y$ be normed spaces. If there exists $T\in {\mathscr
L}(X,Y)$
 that is bijective and has $T^{-1}\in {\mathscr L}(Y,X)$, we say that $X$ and $Y$ are isomorphic and that $T$ is an isomorphism between $X$ and $Y$.
 %The set of all isomorphisms between $X$ and $Y$ is denoted by $\mathcal{I}(X,Y)$.
The Banach-Mazur distance between two isomorphic normed spaces $X$
and $Y$, $d_{\BM}(X,Y)$,   is defined as
$$
\log\left(\inf\{ \|T\|_{\mathscr{L}(X,Y)}
\|T^{-1}\|_{\mathscr{L}(Y,X)}:\ T\in \mathscr{L}(X,Y),\
T^{-1}\in\mathscr{L}(Y,X)\}\right).
%&=\log(\inf\{ \|T\|_{\mathcal{L}(X,Y)}: T\in \mathcal{I}(X,Y), \ %\|T^{-1}\|_{\mathcal{L}(Y,X)}= 1\})
$$
%It is not difficult to see that, for any pair of normed vector spaces $X,Y$, $d_{\BM}(X,Y)\geq 0$ and $d_{\BM}(X,Y)=
%d_{\BM}(Y,X)=d_{\BM}(X^*,Y^*)$. In addition, if $X,Y$ and $Z$ are normed vector spaces then
%\begin{equation}\label{BMprop}
%d_{\BM}(X,Z)\leqslant d_{\BM}(X,Y) + d_{\BM}(Y,Z)
%\end{equation}
 Clearly $d_\BM(X,Y)=0$ if and only if $X$ and $Y$ are isometrically isomorphic; in particular this is the case for any two separable Hilbert spaces of the same cardinality.

 It is a consequence of John's Theorem on bounding ellipses of minimal volume
  (John, 1948) that for any two $n$-dimensional real Banach spaces $U$ and $V$,
  $d_\BM(U,V)\le\log n$, see Bollob\'{a}s (1990, Theorem 4.15).
  Here we give a simple proof that if $U$ is any $n$-dimensional normed space over $\K$ ($=\R$ or $\C$), $d_\BM(U,\K^n_\infty)\le\log n$,
  where
$\K^n_\infty$ denotes $\K^n$ equipped with the $\ell^\infty$ norm:
for $\underline{z}\in\K^n$ with $\underline{z}=(z_1,\ldots,z_n)$,
$z_j\in\K$, we define
$$
\|\underline{z}\|_\infty=\max_{j=1,\ldots,n}\|z_j\|_\K.
$$

In order to show that $d_\BM(U,\K^n_\infty)\le\log n$ we will use an
Auerbach basis for $X$. The proof of the existence of such a basis
when $X$ is real is standard (see Bollob\'{a}s, 1990, Theorem 4.13,
for example); we give a proof of the complex case in an appendix.

\begin{lemma}\label{auerbach}
 Let $X$ be a $n$-dimensional normed vector space (which may be real or complex). Then, there exists a basis $B=\{x_1,\cdots, x_n\}$ for $X$ and a basis $B^*=\{f_1,\cdots, f_n\}$ for $X^*$ with $\|x_i\|_{X}=\|f_i\|_{X^*}=1$ ($i=1,\ldots,n$) such that $f_i(x_j)=\delta_{i\! j}$, $i,j=1,\ldots,n$.
\end{lemma}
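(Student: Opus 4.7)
The plan is to use the classical determinant-maximization argument, adapted to the complex case by working with the modulus of the determinant. First, I would fix some auxiliary basis $(e_1,\ldots,e_n)$ of $X$, use it to identify $X$ linearly with $\K^n$, and thereby obtain a multilinear map $D\colon X^n\to\K$ given by the determinant of the coordinate matrix of its arguments. Because $X$ is finite-dimensional, the closed unit ball $\overline{B}_X(0,1)$ is compact, hence so is its $n$-fold product; the real-valued function $(y_1,\ldots,y_n)\mapsto |D(y_1,\ldots,y_n)|$ is continuous and therefore attains its maximum at some point $(x_1,\ldots,x_n)$.

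Next I would verify that this maximizer has the required normalization. Since $D$ is multilinear, $|D|$ is positively homogeneous of degree $1$ in each slot: if some $\|x_i\|_X<1$, replacing $x_i$ by $x_i/\|x_i\|_X$ would strictly increase $|D|$, contradicting maximality; hence $\|x_i\|_X=1$ for every $i$. The maximum must be nonzero (otherwise $|D|$ would vanish on $\overline{B}_X(0,1)^n$ and hence on $X^n$), so $x_1,\ldots,x_n$ are linearly independent and form a basis $B$ of $X$.

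For the dual basis, I would invoke Cramer's rule: define
\[
f_i(y)\;=\;\frac{D(x_1,\ldots,x_{i-1},y,x_{i+1},\ldots,x_n)}{D(x_1,\ldots,x_n)},\qquad i=1,\ldots,n,
\]
which is linear in $y$ since $D$ is multilinear. By construction $f_i(x_j)=\delta_{ij}$, so $f_i(x_i)=1$ forces $\|f_i\|_{X^*}\ge 1$. Conversely, for any $y\in X$ with $\|y\|_X\le 1$, the numerator is $|D|$ evaluated at an element of $\overline{B}_X(0,1)^n$ and is therefore at most $|D(x_1,\ldots,x_n)|$, giving $|f_i(y)|\le 1$ and hence $\|f_i\|_{X^*}=1$. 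Finally, the $f_i$ are linearly independent in $X^*$: if $\sum a_i f_i=0$, evaluating at $x_j$ yields $a_j=0$. Since $\dim X^*=n$, they form a basis $B^*$ of $X^*$ with the biorthogonality property claimed.

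The only delicate point, and the one that warrants spelling out, is justifying the move from $\R$ to $\K\in\{\R,\C\}$; all of the above steps are insensitive to the scalar field precisely because we maximize $|D|$ (rather than $D$ itself) and exploit the homogeneity $|D(\ldots,\lambda y,\ldots)|=|\lambda|\,|D(\ldots,y,\ldots)|$ for $\lambda\in\K$. This is why the argument succeeds uniformly in both cases, and why a separate treatment of $\C$ (relegated to the appendix) is essentially just a bookkeeping check that determinants and their moduli behave as expected over $\C$.
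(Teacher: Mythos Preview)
Your proof is correct and is the classical determinant-maximization argument carried out uniformly over $\K\in\{\R,\C\}$. The paper, however, does \emph{not} do this in the complex case: it quotes the real result as standard and then, in the appendix, treats the complex case by passing to the underlying $2n$-dimensional real space $X_2$, applying the real Auerbach lemma there to obtain $\{x_1,\ldots,x_{2n}\}$ and real functionals $\{\varphi_1,\ldots,\varphi_{2n}\}$, extracting a $\C$-basis $\{x_1,\ldots,x_n\}$ from the first set, and then manufacturing complex-linear functionals via $f_k(z)=\varphi_k(z)-\ri\,\varphi_k(\ri z)$. So your final remark mischaracterises the appendix: it is not a bookkeeping check on complex determinants but a genuinely different reduction-to-the-real-case argument. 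Your route is more direct and treats both fields at once; the paper's route is modular (black-boxing the real lemma) at the cost of the extra step of reconstructing complex functionals from real ones and verifying their norms via the standard $f_k(z)=r\e^{-\ri\theta}$ trick.
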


Given this lemma the proof of the following proposition is
straightforward.

\begin{proposition}
Let $U$ be an $n$-dimensional Banach space over $\K=\R$ or $\C$.
Then $d_\BM(U,\K^n_\infty)\le\log n$.
\end{proposition}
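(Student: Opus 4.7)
The plan is to use the Auerbach basis supplied by Lemma \ref{auerbach} to write down an \emph{explicit} isomorphism $T$ between $\K^n_\infty$ and $U$, and then estimate $\|T\|$ and $\|T^{-1}\|$ directly. Let $\{x_1,\ldots,x_n\}$ be an Auerbach basis for $U$ and $\{f_1,\ldots,f_n\}$ the corresponding dual basis in $U^*$, so that $\|x_i\|_U=\|f_i\|_{U^*}=1$ and $f_i(x_j)=\delta_{ij}$.

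Define $T\in\mathscr L(\K^n_\infty,U)$ by $T\underline{z}=\sum_{i=1}^n z_i x_i$. Since $\{x_1,\ldots,x_n\}$ is a basis, $T$ is a bijection, and its inverse is given by $T^{-1}u=(f_1(u),\ldots,f_n(u))$ because $f_i(Tz)=z_i$. The first step is to bound $\|T\|$: by the triangle inequality and $\|x_i\|_U=1$,
\[
\|T\underline{z}\|_U \le \sum_{i=1}^n |z_i|\,\|x_i\|_U \le n\,\|\underline z\|_\infty,
\]
so $\|T\|_{\mathscr L(\K^n_\infty,U)}\le n$. Next I would bound $\|T^{-1}\|$: since $\|f_i\|_{U^*}=1$,
\[
\|T^{-1}u\|_\infty=\max_{1\le i\le n}|f_i(u)|\le \max_{1\le i\le n}\|f_i\|_{U^*}\|u\|_U=\|u\|_U,
\]
so $\|T^{-1}\|_{\mathscr L(U,\K^n_\infty)}\le 1$.

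Multiplying the two bounds gives $\|T\|\,\|T^{-1}\|\le n$, and taking logarithms in the definition of the Banach--Mazur distance yields $d_\BM(U,\K^n_\infty)\le\log n$, as required. The only nontrivial ingredient is the existence of the Auerbach basis, which has already been handed to us by Lemma \ref{auerbach}; once that is available, the proof is a direct computation and there is no real obstacle.
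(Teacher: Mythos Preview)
Your proof is correct and follows essentially the same approach as the paper: both define the isomorphism $T\underline{z}=\sum_i z_i x_i$ using an Auerbach basis, bound $\|T\|\le n$ via the triangle inequality, and bound $\|T^{-1}\|\le 1$ using the dual basis functionals. The only difference is cosmetic---you spell out the final multiplication and logarithm explicitly, whereas the paper leaves this implicit.
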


\noindent Note that Ma\~{n}\'{e}'s paper contains a proof of the
bound $d_{\BM}(U,\Re^n_\infty)\leq \log(n2^n)$ when $U$ is real.

\begin{proof}
Let $\{x_1,\ldots,x_n\}$ be an Auerbach basis for $U$, and
$\{f_1,\ldots,f_n\}$ the corresponding basis for $U^*$. Define a map
$J:\K^n_\infty\rightarrow U$ by setting
$$
J(\underline{z})=\sum_{j=1}^n z_jx_j.
$$
Then
$$
\|J(\underline{z})\|_X=\left\|\sum_{j=1}^n
z_jx_j\right\|_X\le\sum_{j=1}^n|z_j|\le n\|\underline{z}\|_\infty,
$$
and so
$$
\|J\|_{\mathscr{L}(\K^n_\infty,U)}\le n.
$$
On the other hand, if $x=\sum_{j=1}^n z_jx_j\in U$ with $\|x\|_X\le
1$ then since $z_j=f_j(x)$,
$$
\|J^{-1}(x)\|_\infty=\|\underline{z}\|_\infty=\max_{j=1,\ldots,n}|z_j|=\max_{j=1,\ldots,n}
|f_j(x)| \leqslant \|x\|_X,
$$
which implies that
$$
\|J^{-1}\|_{\mathscr{L}(X,X_B^\infty)} \leqslant 1.
$$
\end{proof}

\subsubsection{The covering lemma}

We use this result to prove our covering lemma:

\begin{lemma}\label{cover:Banach}
If $U$ is an $n$-dimensional subspace of a real Banach space $X$,
then
\begin{equation}\label{eq:CX}
N_X(B_U(0,r),\rho)\le (n+1)^n\left(\frac{r}{\rho}\right)^n\qquad
0<\rho\le r,
\end{equation}
where the balls in the cover can be taken to have centres in $U$.
The same result holds in a complex Banach space if one replaces the
right-hand side of (\ref{eq:CX}) with its square.
\end{lemma}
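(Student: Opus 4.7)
The plan is to exploit the isomorphism $J:\K^n_\infty\to U$ constructed from an Auerbach basis in the previous proposition, which satisfies $\|J\|\le n$ and $\|J^{-1}\|\le 1$. Since $\|J^{-1}\|\le 1$ gives $B_U(0,r)\subseteq J(B_{\K^n_\infty}(0,r))$, it suffices to cover $B_{\K^n_\infty}(0,r)$ by $\ell^\infty$-balls of some radius $s$, and then observe that their $J$-images are contained in translates of $J(B_{\K^n_\infty}(0,s))\subseteq B_X(0,ns)$. Choosing $s=\rho/n$ produces a cover of $B_U(0,r)$ by balls in $X$ of radius $\rho$ whose centres are of the form $J(c)\in U$, as required.

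The covering problem in $\K^n_\infty$ factorises because $B_{\K^n_\infty}(0,r)$ is a product of $n$ copies of $B_\K(0,r)$ and $\ell^\infty$-balls are products of $\K$-balls; so one only has to estimate how many $\K$-balls of radius $\rho/n$ are needed to cover $B_\K(0,r)$ and raise that count to the $n$th power. In the real case this is simply $\lceil rn/\rho\rceil\le rn/\rho+1$; the inequality $\rho\le r$ then gives $rn/\rho+1\le (n+1)r/\rho$, so the total cover uses at most $(n+1)^n(r/\rho)^n$ balls, which is (\ref{eq:CX}).

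For the complex case, $B_\C(0,r)$ is a two-dimensional Euclidean disc, which a grid argument covers by at most $((n+1)r/\rho)^2$ complex discs of radius $\rho/n$; taking the $n$th power of this one-component estimate and then running the same $J$-transport argument yields the squared bound $(n+1)^{2n}(r/\rho)^{2n}$.

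The main obstacle is the bookkeeping in the complex case, where a direct grid covering of a two-dimensional disc of radius $r$ by discs of radius $\rho/n$ naturally produces an unwanted $\sqrt{2}$ from the diagonal of a grid cell; some care with the choice of grid (or with how the coordinate-wise disc covering is set up) is needed to absorb this into the clean constant $n+1$ stated in the lemma. Apart from that, all other steps are just assembling the estimates $\|J\|\le n$, $\|J^{-1}\|\le 1$, and the product structure of $B_{\K^n_\infty}(0,r)$.
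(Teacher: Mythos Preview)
Your argument is essentially the same as the paper's: reduce to a covering in $\K^n_\infty$ via the isomorphism coming from the Auerbach basis, cover there using the product structure, and transport back. The only cosmetic difference is that the paper phrases things with a generic isomorphism $T$ satisfying $\|T\|\,\|T^{-1}\|\le n$ and covers $B_{\K^n_\infty}(0,\|T^{-1}\|r)$ by balls of radius $\rho/\|T\|$, whereas you plug in the explicit bounds $\|J\|\le n$, $\|J^{-1}\|\le 1$ from the outset; the arithmetic is identical and leads to the same inequality $(1+nr/\rho)^n\le(n+1)^n(r/\rho)^n$.

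On the complex case: your caution about the $\sqrt{2}$ from a naive grid covering of a Euclidean disc is reasonable, but note that the paper is no more explicit here than you are---it simply asserts that a ball of radius $a$ in $\C^n_\infty$ can be covered by $(1+a/b)^{2n}$ balls of radius $b$ and moves on. So you have not missed an idea that the paper supplies; both treatments leave the two-dimensional covering estimate as an exercise, and your identification of where the bookkeeping lies is accurate.
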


\begin{proof} Assume first that $\K=\R$. Since $U$ and $\Re^n_\infty$ are $n$-dimensional, $d_\BM(U,\Re^n_\infty)\le\log n$: in particular, there exists a linear isomorphism $T:\Re^n_\infty\rightarrow U$ such that $\|T\|\|T\|^{-1}\le n$. Since
$$
B_U(0,r)=TT^{-1}(B_U(0,r))\subseteq
T(B_{\Re^n_\infty}(0,\|T^{-1}\|r)),
$$
and $B_{\Re^n_\infty}(0,\|T^{-1}\|\, r)$ can be covered by
$$
\left(1+\frac{\|T^{-1}\|r}{\rho/\|T\|}\right)^n=\left(1+\|T\|\|T^{-1}\|\,\frac{r}{\rho}\right)^n\le
\left(1+n\,\frac{r}{\rho}\right)^n\le
(n+1)^n\,\left(\frac{r}{\rho}\right)^n
$$
balls in $\Re^n_\infty$ of radius $\rho/\|T\|$, it follows that
$B_U(0,r)$ can be covered by the same number of $U$-balls of radius
$\rho$. If $X$ is complex one requires $(1+(a/b))^{2n}$ $b$-balls in
$\C^n_\infty$ to cover a ball of radius $a$.
\end{proof}

\subsection{Coverings of $T[B_X(0,1)]$ via finite-dimensional approximations}

We now have good estimates for the coverings of balls in
finite-dimensional linear subspaces, but we want to cover the images
of balls under linear maps. In order to do this we show that, given
a linear map $T$ that is the sum of a compact map and a contraction,
$T[B_X(0,1)]$ can be well-approximated by $T[B_Z(0,1)]$, where $Z$
is a finite-dimensional subspace of $X$.

We denote by ${\mathscr L}(X)$ the space of bounded linear
transformations from $X$ into itself, by $\mathcal{K}(X)$ the closed
subspace of ${\mathscr L}(X)$ consisting of all compact linear
transformations from $X$ into itself, and define
\begin{equation}\label{Dell-lambda}
{\mathscr L}_\lambda(X)=\{T\in {\mathscr L}(X): T=L+C, \hbox{ with }
C\in \mathcal{K}(X) \hbox{ and } \|L\|_{{\mathscr L}(X)}<\lambda\}.
\end{equation}

By $\dist(A,B)$ we denote the Hausdorff semi-distance between $A$
and $B$,
$$
\dist(A,B)=\sup_{a\in A}\left(\inf_{b\in B}\|a-b\|_X\right).
$$

\begin{lemma}\label{Lll-aux}
Let $X$ be a Banach space and $T\in{\mathscr L}_{\lambda/2}(X)$.
Then there exists a finite-dimensional subspace $Z$ of $X$ such that
\be{itsZ}
{\rm dist}(T[B_X(0,1)],T[B_Z(0,1)])<\lambda.
\ee
We denote by $\nu_\lambda(T)$ the minimum $n\in\N$ such that
(\ref{itsZ}) holds for some $n$-dimensional subspace of $X$.
\end{lemma}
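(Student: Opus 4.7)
\medskip

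\noindent\textbf{Proof plan for Lemma \ref{Lll-aux}.}

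The plan is to exploit the compactness of the operator $C$ in the decomposition $T=L+C$ (with $\|L\|_{\mathscr{L}(X)}<\lambda/2$) to extract a finite set of vectors in $B_X(0,1)$ whose images under $C$ form a suitably fine net for $C[B_X(0,1)]$; the span of these vectors will be the desired subspace $Z$.

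First I would pick $\delta>0$ small enough that $2\|L\|+\delta<\lambda$, which is possible precisely because $\|L\|<\lambda/2$. Since $C\in\mathcal{K}(X)$, the set $C[B_X(0,1)]$ is totally bounded in $X$, so there exist $x_1,\dots,x_n\in B_X(0,1)$ with
$$
C[B_X(0,1)]\subseteq\bigcup_{i=1}^{n}B_X(Cx_i,\delta).
$$
I then define $Z=\mathrm{span}\{x_1,\dots,x_n\}$, which is finite-dimensional, and note that each $x_i$ lies in $B_Z(0,1)=\{z\in Z:\|z\|_X\le 1\}$.

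Next, for an arbitrary $x\in B_X(0,1)$ I would choose an index $i$ with $\|Cx-Cx_i\|_X<\delta$, and estimate
$$
\|Tx-Tx_i\|_X\le\|Cx-Cx_i\|_X+\|L(x-x_i)\|_X\le\delta+\|L\|_{\mathscr{L}(X)}(\|x\|_X+\|x_i\|_X)<\delta+2\|L\|_{\mathscr{L}(X)}<\lambda.
$$
Since $Tx_i\in T[B_Z(0,1)]$, this shows that the distance from any point $Tx$ of $T[B_X(0,1)]$ to $T[B_Z(0,1)]$ is bounded by the fixed quantity $\delta+2\|L\|_{\mathscr{L}(X)}$, which is strictly less than $\lambda$. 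Taking the supremum over $x\in B_X(0,1)$ yields $\mathrm{dist}(T[B_X(0,1)],T[B_Z(0,1)])<\lambda$, as required.

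There is really no major obstacle in this argument: the key point is the interplay between the strict inequality $\|L\|<\lambda/2$ (which provides the necessary slack) and the total boundedness of $C[B_X(0,1)]$ (which provides the finite-dimensional subspace). The only subtlety worth being careful about is that the $x_i$'s themselves, rather than arbitrary preimages of the $\delta$-net centres, lie in $B_X(0,1)$, so they automatically belong to $B_Z(0,1)$ once $Z$ is defined as their span; this makes the approximating points $Tx_i$ legitimately members of $T[B_Z(0,1)]$.
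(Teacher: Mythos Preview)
Your proof is correct and follows essentially the same approach as the paper's: both decompose $T=L+C$, use compactness of $C$ to produce a finite-dimensional subspace $Z$ for which $C[B_Z(0,1)]$ approximates $C[B_X(0,1)]$ to within a prescribed $\epsilon$, and then absorb the $L$-contribution via $\|L\|_{\mathscr{L}(X)}<\lambda/2$. The only difference is cosmetic: the paper obtains $Z$ by a contradiction argument (inductively building a sequence $\{x_j\}\subset B_X(0,1)$ with $\|Cx_i-Cx_j\|\ge\epsilon$ that would violate compactness of $C$), whereas you invoke total boundedness of $C[B_X(0,1)]$ directly and take $Z$ to be the span of the preimages of a $\delta$-net---a slightly more constructive but equivalent manoeuvre.
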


\begin{proof}
Write $T=L+C$, where $C\in\mathcal{K}(X)$ and $L\in{\mathscr L}(X)$
with $\|L\|_{{\mathscr L}(X)}<\lambda/2$. We show first that for any
$\epsilon>0$ there is a finite-dimensional subspace $Z$ such that
\begin{equation}\label{eq:e-approx}
{\rm dist}(C[B_X(0,1)],C[B_Z(0,1)])<\epsilon.
\end{equation}
Suppose that this is not the case. Choose some $x_1\in X$ with
$\|x_1\|_X=1$, and let $Z_1={\rm span}\{x_1\}$. Then
$$
{\rm dist}(C[B_X(0,1)],C[B_{Z_1}(0,1)])\ge\epsilon,
$$
and so there exists an $x_2\in X$ with $\|x_2\|_X=1$ such that
$$
\|Cx_2-Cx_1\|_X\geqslant \epsilon.
$$
With $Z_2={\rm span}\{x_1,x_2\}$, one can find an $x_3$ with
$\|x_3\|_X=1$ such that
$$
\|Cx_3-Cx_1\|_X\geqslant \epsilon\ \hbox{ and }\ \|Cx_3-Cx_2\|_X
\geqslant \epsilon.
$$
Continuing inductively one can construct in this way a sequence
$\{x_j\}$ with $\|x_j\|=1$ such that
$$
\|Cx_i-Cx_j\|_X\ge\epsilon\qquad i\neq j,
$$
contradicting the compactness of $C$.

\medskip

Now let $\tilde\lambda<\lambda$ be such that $2\|L\|_{{\mathscr
L}(X)}<\tilde\lambda<\lambda$, and choose $Z$ using the above
argument so that
$$
{\rm dist}(C[B_X(0,1)],C[B_Z(0,1)])<\lambda-\tilde\lambda.
$$
If $x\in B_X(0,1)$ and $z\in B_Z(0,1)$, then
$$
\|Tx-Tz\|_X\leq \|L(x-z)\|_{X}+\|Cx-Cz\|_{X}\leq \tilde\lambda +
\|Cx-Cz\|_{X}
$$
Hence,
\begin{align*}
{\rm dist}(T[B_X(0,1)],T[B_Z(0,1)]) &\leqslant \tilde\lambda + {\rm
dist}(C[B_X(0,1)],C[B_Z(0,1)])\\
&< \lambda.
\end{align*}

This completes the proof. \end{proof}
%
%Since we have
%
%\begin{proposition}\label{key}
%If $T\in {\mathscr L}(X)$ and there is an $n$-dimensional subspace $Z$ of $X$ for which
%\begin{equation}\label{eq:e-approx-2l}
%{\rm dist}(T[B_X(0,1)],T[B_Z(0,1)])<\lambda,
%\end{equation}
%then for any $r>0$
%$$
%N(T[B_X(0,r)],2\lambda r)\leqslant \left[(n+1)\,\frac{\|T\|}{\lambda}\right]^{\alpha n},
%$$
%where $\alpha=1$ if $X$ is real and $\alpha=2$ if $X$ is complex.
%\end{proposition}
%
%
%\begin{proof}
%
%By the linearity of $T$ it is sufficient to prove the theorem for
%$r=1$. Noting that $T(Z)$ is an $n$-dimensional subspace of $X$, one
%can use Lemma \ref{cover:Banach} to cover the ball
%$B_{T(Z)}(0,\|T\|)$ with balls $B_X(y_i,\lambda)$, $1\leqslant
%i\leqslant k$, such that $y_i\in B_X(0,\|T\|)$ for each $i$ and
%$$
%k \leqslant (n+1)^n \left(\frac{\|T\|}{\lambda}\right)^n.
%$$
%Thus
%\begin{equation}\label{cover_TZ}
%T[B_Z(0,1)]\subseteq B_{T(Z)}(0,\|T\|)=B_X(0,\|T\|)\cap
%T(Z)\subseteq \bigcup_{i=1}^k B_X(y_i,\lambda).
%\end{equation}
%We complete the proof by showing that
%$$
%\bigcup_{i=1}^k  B_X(y_i,{2\lambda})\supseteq T[B_X(0,1)].
%$$
%Indeed, if $x\in B_X(0,1)$ then it follows from
%\eqref{eq:e-approx-2l} that there is a $y\in T[B_Z(0,1)]$ such that
%$\|Tx-y\|_X<\lambda$. Since $y\in T[B_Z(0,1)]$, it follows from
%(\ref{cover_TZ}) that $\|y-y_i\|_X\leqslant \lambda$ for some
%$i\in\{1,\ldots,k\}$, and so
%$$
%\|Tx-y_i\|_X \leqslant \|Tx-y\|_X + \|y-x_i\|_X< 2\lambda,
%$$
%i.e.~$x\in B_X(y_i,2\lambda)$.\end{proof}

\subsection{Uniform estimates for $x\in K$}

\begin{theorem}[after Ma\~n\'e, 1981]\label{TMane-main-improved}
Let $X$ be a Banach space, $U\subset X$ an open set, and $f:U\to X$
a continuously differentiable map. Suppose that $K$ is a compact set
and assume that for some $\lambda$ with $0<\lambda<\frac{1}{2}$,
$$
Df(x) \in {\mathscr L}_{\lambda/2}(X)\qquad\mbox{for all}\quad x\in
K.
$$
Then $n=\sup_{x\in K} \nu_\lambda (Df(x))$ and $D=\sup_{x\in
K}\|Df(x)\|$ are finite, and
\begin{equation}\label{Edh-kni-improved}
N(Df(x)[B_X(0,1)],2\lambda) \leqslant
\left[(n+1)\,\frac{D}{\lambda}\right]^{\alpha n}\qquad\mbox{for
all}\quad x\in K,
\end{equation}
where $\alpha=1$ if $X$ is real and $\alpha=2$ if $X$ is complex. It
follows that
\begin{equation}\label{eq:upperbd}
{\rm dim}_B(K) \leqslant \alpha
n\,\left\lbrace\frac{\log((n+1)D/\lambda)}
{-\log(2\lambda)}\right\rbrace,
\end{equation}
\end{theorem}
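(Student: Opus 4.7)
The plan is to combine three ingredients already prepared in the paper: (i) Lemma~\ref{Lll-aux} to approximate $Df(x)[B_X(0,1)]$ by the image of a ball in a finite-dimensional subspace; (ii) Lemma~\ref{cover:Banach} to cover such a finite-dimensional image efficiently; and (iii) Lemma~\ref{thebound} to convert a uniform ball-covering estimate for $Df(x)$ into a bound on $\dim_B(K)$. The only genuinely new work is to establish that the two suprema $n$ and $D$ are finite; everything else is a careful bookkeeping of constants.

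First I would dispose of the easy supremum: continuity of $x\mapsto Df(x)$ and compactness of $K$ give immediately that $D=\sup_{x\in K}\|Df(x)\|<\infty$. The harder part, which I expect to be the main obstacle, is showing that $n=\sup_{x\in K}\nu_\lambda(Df(x))$ is finite. The strategy here is to prove that $x\mapsto \nu_\lambda(Df(x))$ is upper semicontinuous. Fix $x_0\in K$ and choose a subspace $Z\subset X$ of dimension $\nu_\lambda(Df(x_0))$ realising $\mathrm{dist}(Df(x_0)[B_X(0,1)], Df(x_0)[B_Z(0,1)])<\lambda'$ for some $\lambda'<\lambda$. For any $u\in B_X(0,1)$ pick $z\in B_Z(0,1)$ with $\|Df(x_0)u-Df(x_0)z\|_X<\lambda'$; then
$$
\|Df(y)u-Df(y)z\|_X \le 2\|Df(y)-Df(x_0)\|_{\mathscr L(X)} + \lambda',
$$
so if $y$ is close enough to $x_0$ that $2\|Df(y)-Df(x_0)\|<\lambda-\lambda'$, then the same $Z$ witnesses $\nu_\lambda(Df(y))\le\dim Z=\nu_\lambda(Df(x_0))$. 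Upper semicontinuity on the compact $K$ then forces $n<\infty$.

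With $n$ and $D$ in hand, the covering estimate (\ref{Edh-kni-improved}) is assembled as follows. Fix $x\in K$. By definition of $n$ there is a subspace $Z_x$ of dimension at most $n$ with $\mathrm{dist}(Df(x)[B_X(0,1)],Df(x)[B_{Z_x}(0,1)])<\lambda$. The image $Df(x)[B_{Z_x}(0,1)]$ lies in the finite-dimensional subspace $Df(x)(Z_x)$ (of dimension at most $n$) and is contained in the ball of radius $\|Df(x)\|\le D$ there. Lemma~\ref{cover:Banach}, applied with $r=D$ and $\rho=\lambda$, covers this ball by at most $(n+1)^{\alpha n}(D/\lambda)^{\alpha n}=[(n+1)D/\lambda]^{\alpha n}$ balls of radius $\lambda$, with $\alpha=1$ in the real case and $\alpha=2$ in the complex case. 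Enlarging each of these balls to radius $2\lambda$ absorbs the $\lambda$-approximation error from Lemma~\ref{Lll-aux}, giving (\ref{Edh-kni-improved}).

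Finally, to obtain (\ref{eq:upperbd}) I would apply Lemma~\ref{thebound} with contraction constant $2\lambda<1$ (this is why the hypothesis $\lambda<1/2$ is imposed) and $M=[(n+1)D/\lambda]^{\alpha n}$. Substituting into (\ref{eq:thebound}) yields
$$
\dim_B(K)\le\frac{\log M}{-\log(2\lambda)}=\alpha n\,\frac{\log((n+1)D/\lambda)}{-\log(2\lambda)},
$$
which is exactly the claimed bound. I anticipate no difficulty here beyond tracking the two different uses of the letter $\alpha$ (as the dimensional exponent in the theorem, versus as the ball-shrinking constant in Lemma~\ref{thebound}).
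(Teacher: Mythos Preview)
Your proposal is correct and follows essentially the same route as the paper: both arguments establish finiteness of $n$ by showing that a subspace $Z_{x_0}$ witnessing $\nu_\lambda(Df(x_0))$ continues to work for all nearby $y$ (you make the triangle-inequality estimate explicit, the paper simply appeals to continuity of $Df$), then cover $Df(x)[B_{Z_x}(0,1)]\subset B_{Df(x)(Z_x)}(0,D)$ via Lemma~\ref{cover:Banach} and enlarge radii from $\lambda$ to $2\lambda$ to absorb the approximation error before invoking Lemma~\ref{thebound}. Your write-up is in fact slightly more careful than the paper's in spelling out the upper-semicontinuity step and in noting the clash of notation for $\alpha$.
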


%{\color{red} Is this right? There was an $\epsilon$ before.}

%\begin{remark}
%Using Lemma III.2.1 in Chepyzhov \& Vishik (2002) we obtain that
%\begin{equation}\label{eq:upperbd_Hilbert_CV}
%{\rm dim}_B(K) \leqslant n\,\left\lbrace\frac{\log((7\sqrt{2}D/\lambda)} {-
%\log(2\lambda)}\right\rbrace.
%\end{equation}
%\end{remark}

\begin{proof}
First we show that $n=\sup_{x\in K}\nu_\lambda(Df(x))$ is finite.
For each $x\in K$, there exists a finite-dimensional linear subspace
$Z_x$ such that
$$
\dist(Df(x)[B_X(0,1)],Df(x)[B_{Z_x}(0,1)])<\lambda.
$$
Since $Df(\cdot)$ is continuous, it follows that there exists a
$\delta_x>0$ such that
$$
\dist(Df(y)[B_X(0,1)],Df(y)[B_{Z_x}(0,1)])<\lambda
$$
for all $y\in B_X(x,\delta_x)$,
i.e.~$\nu_\lambda(y)\le\nu_\lambda(x)$ for all such $y$. The open
cover of $K$ formed by the union of $B_X(x,\delta_x)$ over $x$ has a
finite subcover, from whence it follows that $n<\infty$.

Now, since $n=\sup_{x\in K} \nu_\lambda (Df(x)) < \infty$, for each
$x\in K$ there is a subspace $Z_x$ of $X$ with ${\rm dim}(Z_x)
\leqslant n$ such that
$$
{\rm dist}(Df(x)[B_X(0,1)],Df(x)[B_{Z_x}(0,1)])<\lambda.
$$
For ease of notation we now drop the $x$ subscript on $Z_x$, and
write $T=Df(x)$.

Noting that $T(Z)$ is also an $n$-dimensional subspace of $X$, one
can use Lemma \ref{cover:Banach} to cover the ball
$B_{T(Z)}(0,\|T\|)$ with balls $B_X(y_i,\lambda)$, $1\leqslant
i\leqslant k$, such that $y_i\in B_X(0,\|T\|)$ for each $i$ and
$$
k \leqslant \left[(n+1)\,\frac{\|T\|}{\lambda}\right]^{\alpha n}.
$$
Thus
\begin{equation}\label{cover_TZ}
T[B_{Z}(0,1)]\subseteq B_{T(Z)}(0,\|T\|)=B_X(0,\|T\|)\cap
T(Z)\subseteq \bigcup_{i=1}^k B_X(y_i,\lambda).
\end{equation}
We complete the proof by showing that
$$
\bigcup_{i=1}^k  B_X(y_i,{2\lambda})\supseteq T[B_X(0,1)].
$$
Indeed, if $x\in B_X(0,1)$ then it follows from \eqref{itsZ} that
there is a $y\in T[B_{Z}(0,1)]$ such that $\|Tx-y\|_X<\lambda$.
Since $y\in T[B_{Z}(0,1)]$, it follows from (\ref{cover_TZ}) that
$\|y-y_i\|_X\leqslant \lambda$ for some $i\in\{1,\ldots,k\}$, and so
$$
\|Tx-y_i\|_X \leqslant \|Tx-y\|_X + \|y-x_i\|_X< 2\lambda,
$$
i.e.~$x\in B_X(y_i,2\lambda)$.

The result now follows as stated since $n$ is uniform over $x\in K$.
 \end{proof}

Some immediate improvement of the above is possible if we work in a
Hilbert space. A result due to Cheypzhov \& Vishik (2002) (Lemma
III.2.1) shows that in $\R^n$ we need no more than $4^n(r/\rho)^n$
balls of radius $\rho$ to cover a ball of radius $r>\rho$. This
implies that one can replace the factor $(n+1)^n$ in (\ref{eq:CX})
by $7^n$, and obtain a corresponding improvement in the bound of
Theorem \ref{TMane-main-improved}. We do not pursue this direction
further here, since the use of ellipses in place of balls (an
approach initiated by Douady \& Oesterl\'e, 1980, and developed
further by Constantin \& Foias, 1985) leads to significant
improvements on the possible dimension estimates and is now the
standard approach.

\section{Corollaries \& Applications}\label{Scoro}

\subsection{When $Df\in{\mathscr L}_1(X)$}

The following corollary can be found in Hale, Maghala\~es, \& Oliva
(2002):

\begin{corollary}\label{Cmain}
Suppose that $X$ is a Banach space, $U\subset X$ an open set, and
$f:U\rightarrow X$ a continuously differentiable map. Suppose that
$K\subset U$ is a compact set such that $f(K)\supseteq K$, and that
$Df(x)\in{\mathscr L}_1(X)$ for all $x\in K$. Then ${\rm
dim}_B(K)<\infty$.
\end{corollary}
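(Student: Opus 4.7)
My plan is to reduce Corollary~\ref{Cmain} to Theorem~\ref{TMane-main-improved} by passing to a sufficiently high iterate $f^k$. The hypothesis only gives $Df(x)\in\mathscr{L}_1(X)$, so the contraction part may have norm arbitrarily close to $1$, whereas Theorem~\ref{TMane-main-improved} needs a decomposition with contraction constant $<\lambda/2<1/4$. The iterate will shrink the contraction factor geometrically while preserving the compact-plus-contraction structure.

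First I would extract a uniform essential-norm bound on $K$. Define, for $T\in\mathscr{L}(X)$, $\|T\|_e=\inf\{\|T-C\|:C\in\mathcal{K}(X)\}$, so that $T\in\mathscr{L}_1(X)$ iff $\|T\|_e<1$. Since $|\|T\|_e-\|S\|_e|\le\|T-S\|$, the hypothesis that $Df:U\to\mathscr{L}(X)$ is continuous makes $x\mapsto\|Df(x)\|_e$ continuous, and hence bounded on the compact set $K$ by some $r^\ast<1$. Fix $r\in(r^\ast,1)$. Then for every $x\in K$ we may choose a decomposition $Df(x)=L_x+C_x$ with $C_x\in\mathcal{K}(X)$ and $\|L_x\|<r$; the decomposition need not be continuous in $x$, only pointwise.

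Next I would iterate. For $x\in K$, the chain rule gives
\[
Df^k(x)=Df(f^{k-1}x)\cdots Df(f x)\,Df(x)=\prod_{i=0}^{k-1}\bigl(L_{f^i x}+C_{f^i x}\bigr).
\]
Expanding the product yields one pure-$L$ term $L_{f^{k-1}x}\cdots L_x$, whose norm is strictly less than $r^k$, and $2^k-1$ remaining terms, each containing at least one compact factor and therefore compact. Thus $Df^k(x)=\tilde L(x)+\tilde C(x)$ with $\tilde C(x)\in\mathcal{K}(X)$ and $\|\tilde L(x)\|<r^k$, i.e.~$Df^k(x)\in\mathscr{L}_{r^k}(X)$. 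Pick $k$ so large that $r^k<1/4$, set $\lambda=2r^k<1/2$, and note that $K$ is negatively invariant for $f^k$ (iterating $f(K)\supseteq K$). Theorem~\ref{TMane-main-improved} applied to $f^k$ then yields $\mathrm{dim}_B(K)<\infty$.

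The main obstacle is the mild technicality that $f^k$ must be well-defined and $C^1$ on a neighbourhood of $K$: this requires that the forward orbits of a neighbourhood of $K$ stay inside $U$ for the first $k-1$ steps. Since $K$ is compact and $f$ is continuous, for each fixed $k$ a small enough open neighbourhood of $K$ has this property (shrink it inductively using continuity of $f$ and the fact that $f(K)\subseteq U$, which follows because $f(K)\supseteq K$ forces us to have $f$ defined on enough of $K$'s preimage; for global attractors one in fact has $f(K)=K$, and the point is moot). With $f^k$ thus well-defined on a neighbourhood of $K$, the application of Theorem~\ref{TMane-main-improved} goes through verbatim.
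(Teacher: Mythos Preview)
Your proof is essentially the paper's: extract a uniform $r<1$ with $Df(x)\in\mathscr{L}_r(X)$ on $K$, iterate via the chain rule and the identity $(L_1+C_1)(L_2+C_2)=L_1L_2+(\text{compact})$ to get $Df^k(x)\in\mathscr{L}_{r^k}(X)$, and apply Theorem~\ref{TMane-main-improved} to $f^k$ once $r^k<1/4$. Your essential-norm (Calkin quotient) argument for the uniform $r$ is a clean equivalent of the paper's open-cover compactness argument, and your final paragraph flags the same mild technicality about iterates that the paper leaves implicit.
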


\begin{proof}
It follows from an argument similar to that used in the proof of
Theorem \ref{TMane-main-improved} to show that $n<\infty$ that in
fact there exists an $\alpha<1$ such that $Df(x)\in{\mathscr
L}_\alpha(X)$ for all $x\in K$. Note that
  $$
  D[f^p]=Df(f^{p-1}(x))\circ\cdots\circ Df(x),
  $$
  and that if $C_{i}\in{\mathcal K}(X)$ and $L_{i}\in{\mathscr
  L}(X)$, $i=1,2$, then
  $$
  (C_1+L_1)\circ(C_2+L_2)=\underbrace{[C_1\circ C_2+C_1\circ L_2+L_1\circ
  C_2]}_{\in\ {\mathcal K}(X)}+L_1\circ L_2,
  $$
  it follows that if $Df(x)\in{\mathscr
  L}_\alpha(X)$ with $\alpha<1$ then $[D(f^p)](x)\in{\mathscr
  L}_{\alpha^p}(X)$. It follows that for $p$ large enough, $D(f^p)(x)\in{\mathscr
  L}_\lambda$ for some $\lambda<1/4$, for every $x\in K$. One can
  now apply Theorem \ref{TMane-main-improved} to $f^p$ in place of
  $f$ (noting that $f^p(K)\supseteq K$) to deduce
  that $d_f(K)<\infty$.
\end{proof}

\subsection{When $D_xT$ has finite rank}

\begin{corollary} Let $X$ be a Banach space and
assume that $T\in {\mathcal C}^1(X)$, $K$ is a compact set such that
$T(K)=K$, and $D_xT$ has finite rank $\nu(x)$ with $\sup_{x\in K}
\nu(x):=\nu<\infty$. Then,
$$
{\rm dim}_B(K)\leqslant \nu.
$$
\end{corollary}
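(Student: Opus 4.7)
The plan is to apply Lemma~\ref{thebound} directly, using the finite-rank hypothesis to cover $D_xT[B_X(0,1)]$ in a single step via Lemma~\ref{cover:Banach}, rather than going through the two-step compact-plus-contraction approximation of Theorem~\ref{TMane-main-improved}. The point is that when $D_xT$ already has finite rank, the image of the unit ball sits inside an at most $\nu$-dimensional ball, with no approximation error.

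Concretely, first use that $T\in \mathcal{C}^1(X)$ and $K$ is compact to conclude $D:=\sup_{x\in K}\|D_xT\|_{\mathscr{L}(X)}<\infty$. For each $x\in K$, let $Z_x$ denote the range of $D_xT$; by hypothesis $\dim Z_x=\nu(x)\le\nu$, so
$$
D_xT[B_X(0,1)]\subseteq B_X(0,D)\cap Z_x = B_{Z_x}(0,D).
$$
Next, for any fixed $\alpha\in(0,1)$, apply Lemma~\ref{cover:Banach} (with $n=\nu(x)\le\nu$, $r=D$, $\rho=\alpha$) to cover the finite-dimensional ball $B_{Z_x}(0,D)$ by at most
$$
M(\alpha):=(\nu+1)^{\nu}\left(\frac{D}{\alpha}\right)^{\nu}
$$
balls of radius $\alpha$ in $X$, with $M(\alpha)$ independent of $x\in K$. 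Thus (\ref{deriv}) holds uniformly on $K$ with this $M(\alpha)$, and since $T(K)=K\supseteq K$, Lemma~\ref{thebound} gives
$$
\dim_B(K)\le \frac{\log M(\alpha)}{-\log\alpha}
=\nu+\frac{\nu\log((\nu+1)D)}{\log(1/\alpha)}.
$$
Letting $\alpha\downarrow 0$ (for $\alpha$ small enough that $M(\alpha)\ge 1$, which is automatic unless $D=0$, in which case $K$ is a single point) produces $\dim_B(K)\le\nu$. In the complex setting, Lemma~\ref{cover:Banach} doubles the exponents and the same argument yields the bound $2\nu$; the real statement is what is asserted.

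There is no real obstacle: the only thing to check is that the covering bound $M(\alpha)$ can be taken uniform in $x\in K$, which follows immediately from the two inputs $D<\infty$ (compactness plus continuity of $DT$) and $\dim\mathrm{range}(D_xT)\le\nu$ (hypothesis). Morally, this is the limiting case $\lambda\downarrow 0$ of Theorem~\ref{TMane-main-improved}, legitimized by the fact that the decomposition $D_xT=D_xT+0$ lies in $\mathscr{L}_{\lambda/2}(X)$ for every $\lambda>0$.
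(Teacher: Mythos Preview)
Your proof is correct. The paper's own argument is terser but less self-contained: it simply notes that a finite-rank operator lies in $\mathscr{L}_{\lambda/2}(X)$ for every $\lambda>0$, invokes Theorem~\ref{TMane-main-improved} with $n=\nu$ to obtain
\[
{\rm dim}_B(K)\le \nu\,\frac{\log\bigl((\nu+1)D/\lambda\bigr)}{\log(1/2\lambda)},
\]
and then sends $\lambda\to 0$. You instead bypass Theorem~\ref{TMane-main-improved} and go directly to Lemma~\ref{thebound} together with the covering Lemma~\ref{cover:Banach}, sending the covering radius $\alpha\to 0$ at the end. The two limiting arguments are formally parallel (your $\alpha$ plays the role of the paper's $2\lambda$), but your route is the more transparent one in this special case: since the range of $D_xT$ is already finite-dimensional, the approximation step of Lemma~\ref{Lll-aux} that is built into Theorem~\ref{TMane-main-improved} is vacuous, and stripping it out makes clear why no extra factor of $2$ survives in the final bound. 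The paper's route, on the other hand, buys brevity by treating Theorem~\ref{TMane-main-improved} as a black box.
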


\begin{proof} Clearly, for each $\lambda>0$ and $x\in K$, $D_xT\in
{\mathscr L}_\frac{\lambda}{2}(X)$ for all $\lambda>0$.
Consequently, for each $0<\lambda <\frac12$,
$$
{\rm dim}_B(K) \leqslant \nu\frac{\log\left((\nu+1)
\frac{D}{\lambda}\right)}{\log(1/2\lambda)}.
$$
Taking the limit as $\lambda\to 0$ we have that ${\rm dim}_B(K)
\leqslant \nu$.\end{proof}

\subsection{An ordinary differential equation}

\begin{corollary}
Let $f:\R^n\to \R^n$ be a continuously differentiable function.
Assume that the semigroup $\{S(t):t\geqslant 0\}$ in $\R^n$
associated to the ordinary differential equation
\begin{equation*}
\dot x = f(x) \qquad x(0)=x_0\in \R^n.
\end{equation*}
has a global attractor $\mathcal{A}$. If ${\rm rank}(D_x f)\leqslant
k\leqslant n$ for all $x\in \A$, then ${\rm dim}_B(\A)\leqslant k$.
\end{corollary}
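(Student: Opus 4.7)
Plan: The attractor $\mathcal{A}$ is compact in $\R^n$ and satisfies $S(t)(\mathcal{A}) = \mathcal{A}$ for every $t\ge 0$, and $S(t)\in C^1(\R^n)$. The natural first attempt would be to apply the previous finite-rank corollary to $T = S(t)$, but this does not yield the bound $k$: for an ODE in $\R^n$ the derivative $DS(t)(x)$ is the fundamental matrix solution of the variational equation, hence invertible, so ${\rm rank}(DS(t)(x))=n$. I will therefore apply Theorem~\ref{TMane-main-improved} to $T=S(t)$ instead.

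Since $\R^n$ is finite dimensional every bounded linear operator is compact, so $DS(t)(x) = 0 + DS(t)(x)\in\mathscr L_{\lambda/2}(\R^n)$ for every $\lambda>0$ (taking the contraction part to be the zero operator). Thus the hypothesis of Theorem~\ref{TMane-main-improved} is met trivially for every $\lambda\in(0,1/2)$, and the theorem delivers
\[
\dim_B(\mathcal{A})\le n_\lambda\,\frac{\log((n_\lambda+1)D/\lambda)}{-\log(2\lambda)},\qquad n_\lambda := \sup_{x\in\mathcal{A}}\nu_\lambda(DS(t)(x)),
\]
with $D = \sup_{x\in\mathcal{A}}\|DS(t)(x)\|$. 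As $\lambda\downarrow 0$ the fraction tends to $1$, so the conclusion $\dim_B(\mathcal{A})\le k$ reduces to showing that $n_\lambda\le k$ for suitably chosen $t>0$ and all sufficiently small $\lambda$.

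To estimate $\nu_\lambda(DS(t)(x))$ I will use the variational equation
\[
DS(t)(x) = I + \int_0^t Df(S(s)x)\,DS(s)(x)\,{\rm d}s,
\]
together with the invariance $S(s)x\in\mathcal{A}$, which by hypothesis gives ${\rm rank}(Df(S(s)x))\le k$ for all $s\in[0,t]$. Each integrand is therefore a rank-at-most-$k$ operator, and for $t>0$ small, continuity of $s\mapsto Df(S(s)x)$ on the compact set $\mathcal{A}$ forces the image of each integrand to lie near the single $k$-dimensional subspace $Z_x := {\rm Im}(Df(x))$; the candidate $k$-dimensional subspace in the definition of $\nu_\lambda$ is this $Z_x$.

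The main obstacle, which I expect to be the crux of the proof, is the identity term $I$ in the variational equation: $I[B(0,1)]=B(0,1)$ is genuinely $n$-dimensional, so na\"ively $DS(t)(x)[B(0,1)]$ is a full $n$-dim ellipsoid that cannot be covered within Hausdorff semi-distance $\lambda<1$ by the image of any lower-dimensional ball. Handling this will require coupling $t=t(\lambda)$ with $\lambda\downarrow 0$, so that the identity contribution is absorbed into the $\lambda$-tolerance in the definition of $\nu_\lambda$ while the rank-$k$ structure of the integral part becomes dominant; the resulting bound $n_\lambda\le k$ combined with the limit $\lambda\downarrow 0$ in Theorem~\ref{TMane-main-improved} then produces $\dim_B(\mathcal{A})\le k$.
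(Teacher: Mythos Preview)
The paper states this corollary but supplies \emph{no proof}; it is placed immediately after the finite-rank corollary as if it were an obvious consequence, and is followed only by an example. So there is nothing in the paper to compare your argument against at the level of detail. What one can say is that the most plausible reading of the paper---namely, that the authors intend a direct application of the preceding finite-rank corollary to the time-$t$ map $S(t)$---runs into exactly the obstacle you flag in your first paragraph: $DS(t)(x)$ is invertible on $\R^n$, so its rank is $n$, not $k$.

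Your own route via Theorem~\ref{TMane-main-improved} is more promising in spirit, but the resolution you sketch for the ``identity obstacle'' does not work. Since $DS(t)(x)$ is invertible, its image $DS(t)(x)[B(0,1)]$ contains the ball $B(0,\sigma_n)$, where $\sigma_n=\sigma_n\bigl(DS(t)(x)\bigr)>0$ is the smallest singular value. For any subspace $Z$ with $\dim Z=k<n$, the set $DS(t)(x)[B_Z(0,1)]$ lies inside the $k$-dimensional subspace $DS(t)(x)(Z)$, and therefore
\[
\dist\bigl(DS(t)(x)[B(0,1)],\,DS(t)(x)[B_Z(0,1)]\bigr)\ \ge\ \sigma_n\bigl(DS(t)(x)\bigr).
\]
Hence $\nu_\lambda\bigl(DS(t)(x)\bigr)\le k$ forces $\lambda\ge\sigma_n\bigl(DS(t)(x)\bigr)$. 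For small $t$ one has $DS(t)(x)=I+O(t)$, so $\sigma_n\approx 1$, and the theorem requires $\lambda<\tfrac12$: there is no coupling $t=t(\lambda)\downarrow 0$ that makes $n_\lambda\le k$. The identity term is of size $1$, not of size $\lambda$, so it cannot be ``absorbed into the $\lambda$-tolerance''. Taking $t$ large instead does not obviously help either: the hypothesis ${\rm rank}(Df)\le k$ on $\mathcal A$ says nothing about $\sigma_{k+1}\bigl(DS(t)(x)\bigr)$ decaying, and the integral $\int_0^t Df(S(s)x)\,DS(s)(x)\,{\rm d}s$ need not be close to a rank-$k$ operator once $t$ is not small, since the $k$-dimensional ranges ${\rm Im}\,Df(S(s)x)$ vary with $s$ and their union over $s\in[0,t]$ can span all of $\R^n$.

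In short: you have correctly identified the essential difficulty, but the mechanism you propose for overcoming it fails, and the paper itself offers no argument to fill the gap.
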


In particular, if $f:\R^k\to\R^k$, $\beta>0$ and there is a constant
$M>0$ such that $f(x)\cdot x <0$ for $\|x\|_{\R^k}\geqslant M$, then
the semigroup $\{S(t):t\geqslant 0\}$ associated to
\begin{equation*}
\begin{split}
\frac{\rd}{\rd t}&\left(\begin{matrix} x \\
y\end{matrix}\right)=\left(\begin{matrix}
\, 0\, & \, I\, \\
0 & -\beta\end{matrix}\right)
\left(\begin{matrix} x \\
y\end{matrix}\right)+ \left(\begin{matrix} 0 \\
f(x)\end{matrix}\right)\\
&\left(\begin{matrix} x \\
y\end{matrix}\right)(0)=\left(\begin{matrix} x_0 \\
y_0\end{matrix}\right)
\end{split}
\end{equation*}
has a global attractor $\A$ in $\R^k\times\R^k$ with ${\rm
dim}_B(\A)\leqslant k$.

\subsection{Semilinear equations in Banach spaces}

\begin{corollary}\label{Coro-sl}
Let $A:D(A)\subset X\to X$ be a sectorial operator with ${\rm
Re}\sigma(A)>0$. If $f:X^\alpha\to X$ is continuously differentiable
and Lipschitz continuous in bounded subsets of $X^\alpha$ and the
semigroup $\{S(t):t\geqslant 0\}$ in $X^\alpha$ associated to the
abstract parabolic problem
\begin{equation*}
\dot x + Ax = f(x) \qquad\mbox{with}\qquad x(0)=x_0\in X^\alpha
\end{equation*}
has a global attractor $\mathcal{A}$ and either $\re^{-At}$ is
compact for each $t>0$ or $f_x\in \mathcal{K}(X^\alpha,X)$ is
compact for each $x\in \A$, then ${\rm dim}_B(\A)<\infty$.
\end{corollary}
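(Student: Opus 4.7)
The plan is to reduce the corollary to Corollary \ref{Cmain} applied to the time-$t$ map $S(t):X^\alpha\to X^\alpha$ for a suitably chosen $t>0$. The global attractor $\mathcal{A}$ is compact and fully invariant, so in particular $S(t)(\mathcal{A})\supseteq\mathcal{A}$. The standard smoothness theory for semilinear parabolic equations with $C^1$ nonlinearity $f$ ensures that $S(t)$ is continuously differentiable on a neighbourhood of $\mathcal{A}$ (this uses the Lipschitz/boundedness assumptions on $f$ and a neighbourhood of $\mathcal{A}$ on which solutions remain bounded for time $t$). So the only real task is to verify that $DS(t)(x_0)\in\mathscr{L}_1(X^\alpha)$ for every $x_0\in\mathcal{A}$.

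The key tool is the variation of constants formula applied to the linearized equation $\dot v+Av=Df(S(s)x_0)v$, which gives
\begin{equation*}
DS(t)(x_0)\,v_0\;=\;\re^{-At}v_0\;+\;\int_0^t \re^{-A(t-s)}\,Df(S(s)x_0)\,DS(s)(x_0)\,v_0\,\d s.
\end{equation*}
Since $\mathrm{Re}\,\sigma(A)>0$ one has $\|\re^{-At}\|_{\mathscr{L}(X^\alpha)}\le Me^{-\beta t}$ and $\|\re^{-At}\|_{\mathscr{L}(X,X^\alpha)}\le Ct^{-\alpha}e^{-\beta t}$, which is integrable near $s=t$. A Gronwall argument combined with compactness of $\mathcal{A}$ and boundedness of $Df$ on bounded sets yields a uniform bound on $\|DS(s)(x_0)\|_{\mathscr{L}(X^\alpha)}$ over $s\in[0,t]$, $x_0\in\mathcal{A}$, so the integral is well-defined as a bounded operator on $X^\alpha$.

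The two hypotheses are then handled separately. If $\re^{-At}$ is compact on $X$ for every $t>0$, a splitting $\re^{-At}=\re^{-At/2}\re^{-At/2}$ together with the continuous embedding $X^\alpha\hookrightarrow X$ shows that $\re^{-A\tau}$ is compact both as a map $X^\alpha\to X^\alpha$ and as a map $X\to X^\alpha$ for every $\tau>0$; so both summands above are compact (the integral being a uniform-norm limit of Riemann sums of compact operators), whence $DS(t)(x_0)$ is itself compact and trivially lies in $\mathscr{L}_1(X^\alpha)$ for every $t>0$. If instead $Df(x)\in\mathcal{K}(X^\alpha,X)$ for each $x\in\mathcal{A}$, then each integrand $\re^{-A(t-s)}Df(S(s)x_0)DS(s)(x_0)$ is compact on $X^\alpha$ and the same Riemann-sum approximation shows the integral is compact; for $t$ large enough $\|\re^{-At}\|_{\mathscr{L}(X^\alpha)}<1$, so $DS(t)(x_0)$ is the sum of a strict contraction and a compact operator, i.e.\ belongs to $\mathscr{L}_1(X^\alpha)$.

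Having established $DS(t)(x_0)\in\mathscr{L}_1(X^\alpha)$ uniformly for $x_0\in\mathcal{A}$ (for every $t>0$ in the first case, and for all sufficiently large $t$ in the second), Corollary \ref{Cmain} applied to the map $S(t)$ and the negatively invariant set $\mathcal{A}$ gives $\dim_B(\mathcal{A})<\infty$. The main technical obstacle is the control of the integral term: one must justify both the compactness of the Bochner integral (using that the set of compact operators is closed in the operator norm and that the singular factor $(t-s)^{-\alpha}$ is integrable) and the uniformity of the resulting estimate as $x_0$ ranges over $\mathcal{A}$, which rests on the uniform operator bound for $DS(s)(x_0)$ obtained from a Gronwall-type argument on $[0,t]$.
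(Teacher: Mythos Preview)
Your proposal is correct and follows essentially the same approach as the paper: write the variation-of-constants formula for $DS(t)(x_0)$, observe that for $t$ large the hypotheses of Corollary~\ref{Cmain} are met, and conclude. The paper's proof is in fact considerably terser than yours---it simply records the formula $S_x(t)=\re^{-At}+\int_0^t\re^{-A(t-s)}f'(S(s)x)S_x(s)\,\d s$ and asserts that the hypotheses of Corollary~\ref{Cmain} follow---so your explicit treatment of the two cases, the Riemann-sum justification of compactness of the integral, and the Gronwall bound on $\|DS(s)(x_0)\|$ are all details the paper leaves implicit (some of them appear only in the subsequent discussion of the dimension estimate).
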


\begin{proof} For $x\in \A$, let
$$
S(t)x=\re^{-At}x + \int_0^t \re^{-A(t-s)} f(S(s)x)ds
$$
hence, the derivative $S_x(t)\in {\mathscr L}(X^\alpha)$ with
respect to $x$ of $S(t)$ at $x$ satisfies
$$
S_x(t)=\re^{-At} + \int_0^t \re^{-A(t-s)} f'(S(s)x)S_x(s) ds.
$$
Hence, for $t$ suitably large, the hypothesis of Corollary
\ref{Cmain} are satisfied and the result follows.

\end{proof}

Now we show how a rough estimate on the dimension of the attractor
can be obtained. First note that, if $A:D(A)\subset X\to X$ is a
sectorial operator with compact resolvent, $X^\beta$,
$\beta\geqslant 0$ denotes the fractional power spaces associated to
$A$, $\alpha>0$, there is a sequence of finite rank projections
$\{P_n\}_{n\in \N}$ and sequences of positive real numbers
$\{\lambda_n\}_{n\in \N}$ and $\{M_n\}_{n\in \N}$ such that
\begin{equation}\label{Eadmissible}
\|\re^{-At}(I-P_n)\|_{{\mathscr L}(X^\gamma,X^\beta)}\leqslant M_n
t^{-(\beta-\gamma)}\re^{-\lambda_n t},\ t\geqslant 0,\
0\leqslant\gamma\leqslant \beta\leqslant \alpha.
\end{equation}
We say that $A$ is an admissible sectorial operator if it is
sectorial and there is a sequence  $\{\lambda_n\}_{n\in \N}$ and
$M>0$ such that \eqref{Eadmissible} with $M_n=M$ for all $n\in \N$.
It is not difficult to see that, if $A$ is an admissible sectorial
operator, then $A$ has compact resolvent.
$$
\|S_x(t)\|_{{\mathscr L}(X^\alpha)}\leqslant M + MN \int_0^t
(t-s)^{-\alpha} \|S_x(s)\|_{{\mathscr L}(X^\alpha)}ds,
$$
where
$$
N=\sup\{\|f'(x)\|_{{\mathscr L}(X^\alpha,X)}: x\in \A\}
$$
It follows from the generalized Gronwall inequality that
$$
\|S_x(t)\|_{{\mathscr L}(X^\alpha)}\leqslant \frac{\bar M
}{1-\alpha} \re^{(MN\Gamma(1-\alpha))^{1/(1-\alpha)} \, t}.
$$
Now, if $Q_n=(I-P_n)$,
$$
\|Q_nT_x(t)\|_{{\mathscr L}(X^\alpha)}\leqslant M\re^{-\lambda_n t}
+ MN \int_0^t (t-s)^{-\alpha}\re^{-\lambda_n(t-s)}
\|S_x(s)\|_{{\mathscr L}(X^\alpha)}ds
$$
and
\begin{equation*}
\begin{split}
&\|Q_nT_x(t)\|_{{\mathscr L}(X^\alpha)}\\
&\leqslant M\re^{-\lambda_n t} + \frac{\bar M
MN}{1-\alpha}\re^{(MN\Gamma(1-\alpha))^{1/(1-\alpha)}} \int_0^t
(t-s)^{-\alpha}\re^{-(\lambda_n
+(MN\Gamma(1-\alpha))^{1/(1-\alpha)})(t-s)}ds
\\
&\leqslant M\re^{-\lambda_n t} + \frac{\bar M
MN}{1-\alpha}\re^{(MN\Gamma(1-\alpha))^{1/(1-\alpha)}t} \int_0^t
u^{-\alpha}\re^{-(\lambda_n
+(MN\Gamma(1-\alpha))^{1/(1-\alpha)})u}du
\\
&\leqslant M\re^{-\lambda_n t} + \frac{\bar M MN}{1-\alpha}\
\frac{\re^{(MN\Gamma(1-\alpha))^{1/(1-\alpha)}t}
\Gamma(1-\alpha)}{\lambda_n
+(MN\Gamma(1-\alpha))^{1/(1-\alpha)}}=\Lambda_n(t), \ t\geqslant 0.
\end{split}
\end{equation*}

From the admissibility of $A$ and \eqref{Eadmissible},
$\|Q_n\|_{{\mathscr L}(X^\alpha)}\leqslant M$, for all $n\in \N$.
Hence,
$$
\|Q_nT_x(t)Q_n\|_{{\mathscr L}(X^\alpha)} \leqslant M\Lambda(t), \
t\geqslant 0.
$$

Choose $t=1$ and $n_0\in \N$ such that $\Lambda(1)<\lambda<
\frac{1}{4}$. If $F=S(1)$, $L=Q_{n_0}S(1)$ and $C=P_{n_0}S(1)$, then
$\A$ is invariant under $F$. Furthermore, $F_x= L_x + C_x$ with
$L_x= Q_{n_0}S_x(1)$ and $C_x = P_{n_0}S_x(1)$ and if $Z_x=R(C_x)$
and $W_x$ is a subspace of $X$ such that $C_x:W_x\to Z_x$ is an
isomorphism, $F_x\in {\mathscr L}_{\lambda/2}(X)$ for all $x\in
{\mathcal A}$ and for some $\lambda<\frac12$. In addition,
$$
\nu=\sup_{x\in \A}{\rm dim}(Z_x)\leqslant {\rm dim}(R(P)).
$$
This proves that all the assumptions of Theorem
\ref{TMane-main-improved} are satisfied and consequently
\begin{equation}\label{Edim-est-improved}
{\rm dim}_B(\A)\leqslant \nu\frac{\log \left( (\nu+1) \,
\frac{D}{\lambda}\right)}{\log(1/2\lambda)}<\infty.
\end{equation}

\section{Conclusion}

As specific examples of the result in Section \ref{Scoro} (a) we
mention the attractor for the damped wave equation with critical
exponent (see Arrieta et al (1992)) and as an application of the
results in Section \ref{Scoro} (d) we mention the attractors of
dissipative parabolic equations in $L^p(\Omega)$, $W^{1,p}(\Omega)$
as in Arrieta et al (2000)  or the Navier-Stokes Equation in space
dimension 2 as in Temam (1988).

As an example of a problem which does not define a semigroup in a
Hilbert space we mention that of Arrieta et al (submitted for
publication), for Section \ref{Scoro} (d). For Section \ref{Scoro}
(a), the examples are the  attractors for functional differential
equations for which the natural phase space is not a Hilbert space
(see Hale et al., 2002, for example).

%%{\color{red}
%%MORE HERE TO FINISH OFF PROPERLY.
%%}
\section*{Appendix: Auerbach bases in a complex Banach spaces}

In this appendix we give a proof of the existence of an Auerbach
basis in a complex $n$-dimensional Banach space. The result is
standard in a real Banach space, and we use the real version in our
proof.

\begin{lemma}\label{auerbach}
 Let $X$ be a $n$-dimensional complex normed vector space. Then, there exists a basis $B=\{x_1,\cdots, x_n\}$ for $X$ and a basis $B^*=\{f_1,\cdots, f_n\}$ for $X^*$ with $\|x_i\|_{X}=\|f_i\|_{X^*}=1$ ($i=1,\ldots,n$) such that $f_i(x_j)=\delta_{i\! j}$, $i,j=1,\ldots,n$.
\end{lemma}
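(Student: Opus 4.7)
My plan is to mimic the standard volumetric proof of the real Auerbach theorem, replacing the real determinant with its complex analogue so that the field $\K=\C$ causes no trouble. Fix once and for all an arbitrary $\C$-basis $\{e_1,\ldots,e_n\}$ of $X$, and for any $n$-tuple $(x_1,\ldots,x_n)\in X^n$ write $\Delta(x_1,\ldots,x_n)$ for the $\C$-determinant of the matrix whose $j$th column is the coordinate vector of $x_j$ in this basis. Set $S=\{x\in X:\|x\|_X=1\}$ and observe that $S$ is compact since $X$ is finite-dimensional.

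The first step is to show that the continuous function $|\Delta|$ attains a strictly positive maximum on $S^n$. Continuity is clear, compactness of $S^n$ follows from that of $S$, and evaluating at any normalization of $\{e_1,\ldots,e_n\}$ shows the supremum is strictly positive. Call the maximizer $(x_1^*,\ldots,x_n^*)$; non-vanishing of $\Delta(x_1^*,\ldots,x_n^*)$ forces linear independence, so these vectors form a normalized $\C$-basis of $X$.

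Next I would define the candidate dual vectors by
$$
f_i(x)\;=\;\frac{\Delta(x_1^*,\ldots,x_{i-1}^*,x,x_{i+1}^*,\ldots,x_n^*)}{\Delta(x_1^*,\ldots,x_n^*)},\qquad x\in X,
$$
for each $i=1,\ldots,n$. Multilinearity of $\Delta$ over $\C$ makes each $f_i$ a $\C$-linear functional, and its alternating property yields the biorthogonality $f_i(x_j^*)=\delta_{ij}$ at once. To verify $\|f_i\|_{X^*}=1$, note that for any $x\in S$ the perturbed $n$-tuple obtained by replacing the $i$th slot of $(x_1^*,\ldots,x_n^*)$ by $x$ still lies in $S^n$; maximality of $|\Delta(x_1^*,\ldots,x_n^*)|$ therefore gives $|f_i(x)|\le 1$, while $|f_i(x_i^*)|=1$ supplies the matching lower bound.

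The only real obstacle is confirming that every ingredient of the classical real proof survives the passage to $\C$: continuity and multilinearity of the complex determinant, compactness of the unit sphere in finite dimensions, and the maximality step are all field-independent, so the argument goes through verbatim. In this sense the proof genuinely reduces to (a direct transplant of) the real version.
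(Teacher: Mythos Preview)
Your proof is correct and follows the standard volumetric argument for Auerbach's lemma, observing (correctly) that nothing in that argument depends on the scalar field being $\R$ rather than $\C$: the complex determinant is still continuous, multilinear, and alternating, and the unit sphere of a finite-dimensional complex normed space is compact. The paper takes a genuinely different route: rather than re-running the determinant argument over $\C$, it regards $X$ as a real $2n$-dimensional normed space $X_2$ (with the same norm), invokes the \emph{real} Auerbach lemma there to obtain a basis $\{x_1,\ldots,x_{2n}\}$ with real dual functionals $\{\varphi_1,\ldots,\varphi_{2n}\}$, extracts $n$ of the $x_j$ that span $X$ over $\C$, and then manufactures the required $\C$-linear functionals via the classical complexification formula $f_k(z)=\varphi_k(z)-\ri\,\varphi_k(\ri z)$. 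Your direct approach is shorter and avoids the somewhat delicate relabelling and spanning arguments needed in the paper's reduction; the paper's approach, on the other hand, makes explicit the general mechanism by which complex Banach-space statements can be inherited from their real counterparts, treating the real case as a genuine black box rather than reproving it over $\C$.
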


\begin{proof} Given a basis $B_0=\{y_1,\cdots, y_n\}$ of $X$ we consider the real space %$X_\R$ generated by $B$. We now take the %basis $B=\{x_1,\ldots,x_n\}$ for $X_\R$ and the basis %$B^*=\{x_1^*,\ldots,x_n^*\}$ for $X_\R^*$ whose existence is %guaranteed by Lemma \ref{Lreal}. Since the span of $B$ contains %$B_0$, it forms a basis for $X$ (over $\C$).
%%%
%It follows that the linear span of
%$$
%B'=\{x_1,\ldots,x_n,\ri x_1,\ldots,\ri x_n\}
%$$
%over $\R$ is $X$.
$X_2$ given as the linear span of over $\R$ of
$$
B_2=\{y_1,\ldots,y_n,\ri y_1,\ldots,\ri y_n\},
$$
equipped with the norm $\|z\|_{X_2}=\|z\|_X$. We now apply the real
version of the result to $X_2$, to produce a basis
$\{x_1,\ldots,x_{2n}\}$ for $X_2$ and
$\{\varphi_1,\ldots,\varphi_{2n}\}$ for $X_2^*$ such that
$\varphi_j(x_k)=\delta_{jk}$ and $\|\varphi_j\|=\|x_j\|=1$.

Since $x_j\in X$ and the $\{x_j\}$ must span $X$ over $\C$, we can
reorder and relabel the $\{x_j\}$ (and the corresponding
$\{\varphi_j\}$) so that $\{x_1,\ldots,x_n\}$ span $X$ over $\C$. It
follows that
$$
\{x_1,\ldots,x_n,\ri x_1,\ldots,\ri x_n\}
$$
span $X$ over $\R$. Furthermore, each $\ri x_j$ must be a linear
combination of $\{x_{n+1},\ldots,x_{2n}\}$, since it follows from
the fact that $\{x_1,\ldots,x_n\}$ form a basis for $X$ over $\C$
that $\{\ri x_1,x_2,x_n\}$ are linearly independent (over $\C$, so
certainly over $\R$).

For $k=1,\ldots,n$ we define an element $f_k\in X^*$ via
$$
f_k(z)=\varphi_k(z)-\ri \varphi_k(\ri z)\qquad z\in X,
$$
where in order to interpret $\varphi_k(z)$ we consider $z$ as an
element of $X_2$ (expand in terms of the basis $\{x_1,\ldots,x_n\}$
and split the real and imaginary parts of the coefficients). We now
show that the $\{f_k\}$ have the properties we require.

First, note that for $1\le k,j\le n$,
$$
f_k(x_j)=\varphi_k(x_j)-\ri \varphi_k(\ri x_j)=\delta_{kj},
$$
since $\ri x_j$ is a linear combination of
$\{x_{n+1},\ldots,x_{2n}\}$, and thus $\varphi_k(\ri x_j)=0$. All
that remains is to show that $\|x_k^*\|=1$. To do this we follow the
standard argument (e.g.\ Yosida, 1980), writing
$f_k(z)=r\e^{-\ri\theta}$. Then
$$
|f_k(z)|=\e^{\ri\theta}f_k(z)=f_k(\e^{\ri\theta}z),
$$
so that $f_k(\e^{\ri\theta}z)$ is real and positive. It follows that
$$
|f_k(z)|=f_k(\e^{\ri\theta}z)=\varphi_k(\e^{\ri\theta}z)\le\|\e^{\ri\theta}z\|_{X_2}\le\|\e^{\ri\theta}z\|_X=\|x\|_X,
$$
and the lemma is proved.
\end{proof}

%
%\begin{lemma}\label{Ll1-x-linfty} Let $X$ be a $n$-dimensional normed vector space over $\mathbb{K}$. Then there exists a basis $B$ for $X$ such that
%\begin{equation}\label{Especial-choice}
%B_{X_{B}^1}(0,1)\subset B_X(0,1)\subset B_{X_{B}^\infty}(0,1).
%\end{equation}
%\end{lemma}
%
%\begin{proof} We take $B=\{y_1,\ldots,y_n\}$ to be the basis provided by Lemma \ref{auerbach} and $\{y_1^*,\ldots,y_n^*\}$ to be the corresponding elements of $X^*$. To prove the first inclusion, take $x\in B_{X_B^1}(0,1)$, and then
%$$
%\|x\|_X= \left\|\sum_{i=1}^n \lambda_i x_i \right\|_X \leqslant \sum_{i=1}^n |\lambda_i|=\|x\|_{X_B^1}\leqslant 1
%$$
%(note that we do not use the fact that we have a special basis here). To prove the second inclusion take $x\in B_{X}(0,1)$ with $x=\sum_{i=1}^n \lambda_i y_i$, and then, since $|\lambda_i|\le 1$ for each $i$,
%which completes the proof.\end{proof}
%
%
%
%
%
%\begin{proof} (Proposition \ref{Pdbmest}) Let $B=\{x_1, \cdots x_n\}$ be a basis for $X$ such that $\|x_i\|_{X}=1$, $1\leqslant i \leqslant n$, and such that \eqref{Especial-choice} is satisfied. while it follows from \eqref{Especial-choice} that
%

\bigskip

\section*{Acknowledgements}
Part of this work
has been carried out while ANC and JAL were visiting the Warwick
Mathematics Institute, University of Warwick, UK. They both wish to
acknowledge the great hospitality of the people in this institution.
The paper was completed while all three authors were visiting the de
Giorgi Centre in Pisa; we are very grateful for the opportunity this
visit gave us to continue our collaboration. ANC was partially
supported by grants CNPq 302022/2008-2 and 451761/2008-1, CAPES/DGU
267/2008 and FAPESP 2008/53094 and 2008/55516-3, Brazil; JAL by
Ministerio de Ciencia e Innovaci\'on grant MTM2008-0088, and Junta
de Andaluc\'{\i}a grants P07-FQM-02468 and FQM314; JCR wishes to
acknowledge the support of the EPSRC, grant EP/G007470/1

\end{document}